\numberwithin{equation}{section}
\newtheorem{art_conj}{Conjecture}[section]
\newtheorem{dirichlet}[art_conj]{Theorem}
\newtheorem{clss-fld}[art_conj]{Theorem}
\newtheorem{dirichlet-cor}[art_conj]{Corollary}
\newtheorem{art_prop}[art_conj]{Proposition}
\newtheorem{langlands}[art_conj]{Theorem}
\newtheorem{lang-cor}[art_conj]{Corollary}
\newtheorem{klein}[art_conj]{Theorem}
\newtheorem{l-series}[art_conj]{Lemma}
\newtheorem{q-curve}[art_conj]{Proposition}
\newtheorem{modularity}[art_conj]{Proposition}
\newtheorem{deformations}[art_conj]{Theorem}
\begin{document}


\title{Artin's Conjecture and Elliptic Curves}

\author{Edray Herber Goins}

\address{Institute for Advanced Study \\ School of Mathematics \\ Olden Lane \\ Princeton, NJ 08540}

\subjclass{Primary 11F80; Secondary 11F03, 11F66}

\email{goins@math.ias.edu}

\thanks{This research was sponsored by a joint fellowship from the National
Physical Science Consortium (NSPC) and the National Security Agency (NSA),
as well as a grant from the NSF, number DMS 97-29992.}


\begin{abstract}
Artin conjectured that certain Galois representations should give rise
to entire L-series.  We give some history on the conjecture and
motivation of why it should be true by discussing the one-dimensional
case.  The first known example to verify the conjecture in
the icosahedral case did not surface until Buhler's work in 1977.  We
explain how this icosahedral representation is attached to a modular
elliptic curve isogenous to its Galois conjugates, and then explain how
it is associated to a cusp form of weight 5 with level prime to 5.
\end{abstract}

\maketitle


\section{Introduction}

In 1917, Erich Hecke \cite{MR89k:01043} proved a series of results
about certain characters which are now commonly referred to as Hecke
characters;
one corollary states that one-dimensional complex Galois representations give
rise to entire L-series.  He revealed, through a series of lectures
\cite{MR85c:11042} at Princeton's Institute for Advanced Study in the years
that followed, the relationship between such representations as
generalizations of Dirichlet characters and modular forms as the eigenfunctions of a set
of commuting self-adjoint operators.
Many mathematicians were inspired by his ground-breaking insight
and novel proof of the analytic continuation of the L-series.

In the 1930's, Emil Artin \cite{MR31:1159} conjectured
that a generalization of such a result should be true; that is,
irreducible complex projective representations of finite Galois
groups should also give rise to entire L-series.  He came
to this conclusion after proving himself that both
3-dimensional and 4-dimensional representations of the simple group of
order 60, the
alternating group on five letters, might give rise to L-series with
singularities.  It
is known, due to the insight of Robert Langlands \cite{MR81e:12013} in
the 1970's relating Hecke characters with Representation Theory, that
in order to prove the conjecture it suffices
to prove that such representations are associated to cusp forms.  This conjecture
has been the motivation for much study in both Algebraic and Analytic
Number Theory ever since.

In this paper, we present an elementary approach to Artin's
Conjecture by considering the problem over $\mathbb Q$.  We consider
Dirichlet's theorem which preceeded Hecke's results, and
sketch a proof by introducing theta series.  We then introduce
Langland's program to exhibit cusp forms.  We
conclude by studying a specific example which is associated to an
elliptic curve.  We assume in the final sections that the reader is
somewhat familiar with
the basic properties of elliptic curves.


\section{One-Dimensional Representations}

We begin with some classical definitions and theorems.  We are
motivated by the expositions in \cite{MR92e:11001} and \cite{MR97k:11080}.

\subsection{Reimann Zeta Function and Dirichlet L-Series}

Let $N$ be a fixed positive integer, and $\chi: \left( \mathbb Z / N \mathbb Z
\right)^{\times} \to \mathbb C^{\times}$ be a group homomorphism.  We
extend $\chi: \mathbb Z \to \mathbb C$ to the entire ring of
integers by defining 1) $\chi(n) = \chi(n \mod N)$ on the
residue class modulo $N$; and 2) $\chi(n) = 0$ if $n$ and $N$ have a factor in
common.  One easily checks that this extended definition still yields a
multiplicative map
i.e. $\chi(n_1 \, n_2) = \chi(n_1) \, \chi(n_2)$.

Fix a complex number $s \in \mathbb C$ and associate the {\em L-series}
to $\chi$ as the sum

\begin{equation}
    L(\chi, \, s) = \chi(1) + \frac {\chi(2)}{2^s} + \frac
    {\chi(3)}{3^s} + \dots = \sum_{n=1}^{\infty} \frac {\chi(n)}{n^s}
\end{equation}

\noindent Quite naturaly, two questions arise:

{\em \begin{center}
For which region is this series a well-defined function?

Can that function be continued analytically to the entire complex plane?
\end{center}}

\noindent This series is reminiscent of the {\em Reimann zeta
function}, defined as

\begin{equation}
    \zeta(s) = 1 + \frac 1{2^s} + \frac 1{3^s} + \dots =
\sum_{n=1}^{\infty} \frac 1{n^s}
\end{equation}

\noindent We may express this sum in a slightly different fashion.  It
is easy to check that\footnote{In the literature, it is standard to define
\[ \Gamma(s) = \int_0^{\infty} e^{-y} \, y^{s-1} \, dy = (s-1)!,
\qquad \text{Re}(s) \geq 1\]
as the Generalized Factorial or Gamma function.}

\begin{equation}
   \int_0^{\infty} e^{-n y} \, y^{s-1} \, dy = \frac {(s-1)!}{n^s}
\end{equation}

\noindent so that the sum becomes

\begin{equation}
    \zeta(s) = \sum_{n=1}^{\infty} \frac 1{n^s} = \sum_{n=1}^{\infty}
    \frac 1{(s-1)!} \, \int_0^{\infty} e^{-n y} \, y^{s-1} \, dy =
    \frac 1{(s-1)!} \, \int_0^{\infty} \frac {y^{s-1}}{e^y - 1} \, dy
\end{equation}

\noindent The function $e^y$ grows faster that any power of $y$, so
the integrand converges for all complex $s-1$ with positive real part.
However, we must be careful when $y=0$; the denominator of the
integral vanishes, so we need the numerator to vanish as well.  This
is not the case if $s = 1$.  Hence, we find the

\begin{dirichlet}[Dirichlet \cite{MR92e:11001}] \label{dirichlet}
$L(\chi, \, s)$ is a convergent series if $\text{Re}(s) > 1$.
If $\chi$ is not the trivial character $\chi_0 = 1$ (i.e. $N \neq 1$) then
$L(\chi, \, s)$ has analytic continuation everywhere.  On the other hand,
$L(\chi_0, \, s) = \zeta(s)$ has a pole at $s=1$.  In either case,
the L-series has the product expansion

\begin{equation}
    L(\chi, \, s) = \prod_{\text{primes } p} \left( 1 - \frac{\chi(p)}{p^s}
\right)^{-1}
\end{equation}

\end{dirichlet}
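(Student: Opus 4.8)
The plan is to dispatch the three elementary assertions directly and then obtain the analytic continuation through a theta-series argument in the style of Riemann and Hecke. For convergence and the Euler product, note that $|\chi(n)| \le 1$, so for $\sigma = \text{Re}(s) > 1$ the series $\sum_n \chi(n) n^{-s}$ is dominated by $\sum_n n^{-\sigma} = \zeta(\sigma) < \infty$; hence it converges absolutely and locally uniformly and defines a holomorphic function on $\{\text{Re}(s) > 1\}$. In that half-plane absolute convergence permits rearrangement: expanding $(1 - \chi(p)p^{-s})^{-1} = \sum_{k \ge 0} \chi(p)^k p^{-ks} = \sum_{k\ge0}\chi(p^k)p^{-ks}$ and multiplying over the primes $p \le X$, unique factorization together with the complete multiplicativity of the extended $\chi$ identifies the partial product with the sub-sum of $\sum_n \chi(n)n^{-s}$ over integers whose prime factors are all $\le X$; letting $X \to \infty$ yields the product formula.

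For the pole of $L(\chi_0,s) = \zeta(s)$ at $s = 1$, I would start from the representation (4) and split $\int_0^\infty = \int_0^1 + \int_1^\infty$. The tail is entire in $s$ because $y^{s-1}/(e^y-1)$ decays exponentially as $y \to \infty$. On $[0,1]$ write $\frac{1}{e^y - 1} = \frac1y + h(y)$ with $h$ holomorphic on $|y| < 2\pi$; then $\frac{1}{\Gamma(s)}\int_0^1 y^{s-2}\,dy = \frac{1}{\Gamma(s)(s-1)}$ supplies a simple pole at $s = 1$ with residue $1/\Gamma(1) = 1$, while $\frac{1}{\Gamma(s)}\int_0^1 y^{s-1}h(y)\,dy$ is holomorphic for $\text{Re}(s) > 0$. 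Thus $\zeta$ continues past $s=1$ with precisely a simple pole there; the full continuation to $\mathbb C$ falls out of the theta argument below, the pole being the signature of the constant term in the relevant theta series.

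For the analytic continuation of $L(\chi,s)$ when $\chi \neq \chi_0$, I first reduce to a primitive character: if $\chi$ is induced from the primitive character $\chi^*$ of conductor $f \mid N$, then $L(\chi,s) = L(\chi^*,s)\prod_{p\mid N}\bigl(1 - \chi^*(p)p^{-s}\bigr)$ and the finite product is entire, so it suffices to treat $\chi$ primitive and nontrivial of conductor $f > 1$. Put $\mathfrak a \in \{0,1\}$ with $\chi(-1) = (-1)^{\mathfrak a}$ and form the theta series
\[ \theta_\chi(t) = \sum_{n\in\mathbb Z} \chi(n)\,n^{\mathfrak a}\,e^{-\pi n^2 t/f}, \qquad t > 0, \]
which decays exponentially as $t \to \infty$ because $\chi(0) = 0$ and $\chi$ is nontrivial, so there is no constant term. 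A term-by-term Mellin transform, legitimate for $\text{Re}(s) > 1$ by absolute convergence, produces the completed L-function
\[ \Lambda(\chi,s) := \Bigl(\tfrac{f}{\pi}\Bigr)^{(s+\mathfrak a)/2}\,\Gamma\!\Bigl(\tfrac{s+\mathfrak a}{2}\Bigr)\,L(\chi,s) = \frac12\int_0^\infty \theta_\chi(t)\,t^{(s+\mathfrak a)/2}\,\frac{dt}{t}. \]

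The crux is the modular transformation law $\theta_\chi(1/t) = \bigl(\tau(\chi)/i^{\mathfrak a}\sqrt f\,\bigr)\,t^{\,\mathfrak a + 1/2}\,\theta_{\bar\chi}(t)$, where $\tau(\chi) = \sum_{a \bmod f}\chi(a)e^{2\pi i a/f}$ is the Gauss sum. To prove it I would sort the sum over $n$ into residue classes modulo $f$ and apply the Poisson summation formula to the Gaussian in each class; the Fourier side reorganizes, via the identity $\sum_{a \bmod f}\chi(a)e^{2\pi i ab/f} = \bar\chi(b)\,\tau(\chi)$, into $\theta_{\bar\chi}$ — and it is exactly here that primitivity of $\chi$ is indispensable, since that identity fails for imprimitive arguments. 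Granting the transformation, I split the Mellin integral at $t = 1$, substitute $t \mapsto 1/t$ in the part over $(0,1)$, and use the exponential decay of $\theta_\chi$ and $\theta_{\bar\chi}$ at both endpoints to see that the two resulting integrals converge for every $s \in \mathbb C$ and are entire, with no polar terms appearing precisely because $\chi$ is nontrivial. Hence $\Lambda(\chi,s)$, and therefore $L(\chi,s)$ — the $\Gamma$-factor being zero-free — is entire, and one reads off the functional equation relating $s$ and $1 - s$ as a bonus. The two steps demanding care are the interchange of summation and integration in the Mellin step, handled by the absolute convergence guaranteed for $\text{Re}(s) > 1$, and the Gauss-sum evaluation for arguments $b$ not coprime to $f$, which is the technical heart of the whole argument.
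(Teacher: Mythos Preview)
Your argument is correct and considerably more complete than the paper's, but it takes a genuinely different route. The paper gives only an illustrative sketch in the case $N=5$: it uses the Gamma-integral representation $n^{-s} = \frac{1}{\Gamma(s)}\int_0^\infty e^{-ny}\,y^{s-1}\,dy$ (not the quadratic-exponent theta series), explicitly sums the resulting geometric series $\sum_n \chi(n)e^{-ny}$ to a rational function in $e^{-y}$, and observes that for the Legendre symbol mod $5$ the integrand is regular at $y=0$, so the pole at $s=1$ disappears. This is concrete and elementary---it requires no Poisson summation, no Gauss sums, and no reduction to primitive characters---but as written it only establishes holomorphy in a neighborhood of $s=1$, not the full analytic continuation to $\mathbb C$. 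Your Riemann--Hecke theta approach, by contrast, yields the entire continuation and the functional equation in one stroke, at the cost of the Gauss-sum identity and the primitivity reduction you flag. It is worth noting that the paper itself deploys essentially your method a few pages later, in the proof of the Artin--Dirichlet corollary, so your instinct matches the paper's broader strategy even if it diverges from the immediate proof of this theorem.
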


\begin{proof} We present a sketch of proof when $N=5$, although the
method works in general.  Consider the character

\begin{equation}
    \left( \frac 5n \right) = \begin{cases} +1 & \text{if $n \equiv
    1, \, 4 \mod 5$;} \\ -1 & \text{if $n \equiv 2, \, 3 \mod 5$;} \\
    \ \ 0 & \text{if $n$ is divisible by 5.} \end{cases}
\end{equation}

\noindent The L-series in this case may be expressed as the integral

\begin{equation}
    L \left( \left( \frac 5* \right), \, s \right) = \frac 1{(s-1)!}
    \, \int_0^{\infty} \left[ \sum_{n=1}^{\infty} \left( \frac 5n
    \right) \, e^{-n y} \right] \, y^{s-1} \, dy
\end{equation}

\noindent The infinite sum inside the integrand can be evaulated by
considering different cases:  When $n$ is divisible by 5, the character
vanishes.  Otherwise write $n = 5 \, k + 1, \ldots, 5 \, k + 4$ in the
other cases so that the sum becomes

\begin{equation} \begin{aligned}
    \sum_{n=1}^{\infty} \left( \frac 5n \right) \, e^{-n y} & =
    \sum_{k=0}^{\infty} \left[ + e^{-(5k+1)y} - e^{-(5k+2)y} -
    e^{-(5k+3)y} + e^{-(5k+4)y} \right] \\
    & = \frac {e^{-y} - e^{-2y} - e^{-3y} + e^{-4y}}{1 - e^{-5y}}
    = \frac {e^{3y} - e^y}{1 + e^y + e^{2y} + e^{3y} + e^{4y}}
\end{aligned} \end{equation}

\noindent which gives the L-series as

\begin{equation}
    L \left( \left( \frac 5* \right), \, s \right) = \frac 1{(s-1)!}
    \, \int_0^{\infty} \frac {e^{3y} - e^y}{1 + e^y + e^{2y} +
    e^{3y} + e^{4y}} \ y^{s-1} \ dy
\end{equation}

\noindent Hence the integrand is analytic at $y=0$, so the L-series
does not have a pole at $s=1$. \end{proof}

\subsection{Galois Representations}

Fix $q(x)$ as an irreducible polynomial of degree $d$ with leading
coefficient 1 and
integer coefficients, and set $K/\mathbb Q$
as its splitting field.  The group of permutations of the roots
$\text{Gal}(K/\mathbb Q)$ has a canonical representation as $d \times
d$ matrices.  To see why, write the $d$ roots $q_k$ of $q(x)$ as
$d$-dimensional unit vectors:

\begin{equation}
    q_1 = \begin{pmatrix} 1 \\ 0 \\ \vdots \\ 0 \end{pmatrix}; \qquad
    q_2 = \begin{pmatrix} 0 \\ 1 \\ \vdots \\ 0 \end{pmatrix}; \qquad
    \ldots; \qquad
    q_d = \begin{pmatrix} 0 \\ 0 \\ \vdots \\ 1 \end{pmatrix}. \qquad
\end{equation}

\noindent  Any permutation $\sigma$ on these roots may be
representated as a $d \times d$ matrix $\varrho(\sigma)$.  One easily
checks that $\varrho$ is a multiplicative map i.e. $\varrho(\sigma_1 \,
\sigma_2) = \varrho(\sigma_1) \, \varrho(\sigma_2)$.
As an example, consider $x^2 + x - 1$.  Then $K = \mathbb
Q(\sqrt{5})$, and the only permutation of interest is

\begin{equation}
    \sigma: \quad \frac {-1 + \sqrt{5}}2 \mapsto \frac {-1 - \sqrt{5}}2
    \qquad \implies \qquad
    \varrho(\sigma) = \begin{pmatrix} & 1 \\ 1 \end{pmatrix}.
\end{equation}

Denote $G_{\mathbb Q}$, the {\em absolute Galois group}, as the union of
each of the $\text{Gal}(K/\mathbb Q)$ for such polynomials $q(x)$ with
integer coefficients.  This larger group still
permutes the roots of a specific polynomial $q(x)$; we consider it
because it is a universal object independent of $q(x)$.  We view the
permutation representation as a group homomorphism
$\varrho: G_{\mathbb Q} \to GL_d(\mathbb C)$.

The finite collection of matrices $\{ \varrho(\sigma) | \sigma \in
G_{\mathbb Q}\}$ acts on the $d$-dimensional
complex vector space $\mathbb C^d$, so we are concerned with {\em lines}
which are invariant under the action of all of the $\varrho(\sigma)$.  That
is, if we attempt to simultantously diagonalize all of the $\varrho(\sigma)$
then we want to consider one-dimensional invariant subspaces.
For example, for $x^2 + x - 1$ we may instead choose the basis

\begin{equation}
    {q_1}' = \frac 1{\sqrt{2}} \, \begin{pmatrix} 1 \\ 1 \end{pmatrix} \, \quad
    {q_2}' = \frac 1{\sqrt{2}} \, \begin{pmatrix} \ \ 1 \\ -1
    \end{pmatrix} \qquad \implies \qquad \varrho'(\sigma) = \begin{pmatrix}
    1 \\ & -1 \end{pmatrix};
\end{equation}

\noindent so that we have the more intuitive representation
$\rho(\sigma) = -1$ defined on the eigenvalues.  In general, we do not
wish to consider the $d \times d$ matrix representation $\varrho$, but
rather the scalar $\rho: G_{\mathbb Q} \to \mathbb C^{\times}$.

In order to define an L-series, we use the product expansion found
in \ref{dirichlet} above.  To this end, choose a prime number $p$ and
factor $q(x)$ modulo $p$, say in the form

\begin{equation}
    q(x) \equiv \left( x^{f_1} + \dots \right)^{e_1} \, \left( x^{f_2} +
    \dots \right)^{e_2} \hdots \left( x^{f_r} + \dots \right)^{e_r} \mod p
\end{equation}

\noindent If each $e_j = 1$, we say $p$ is {\em unramified} (and
{\em ramified} otherwise).  In this case, there is a universal automorphism
$\text{Frob}_p \in G_{\mathbb Q}$ which yields surjective maps
$G_{\mathbb Q} \to \mathbb Z / f_j \, \mathbb Z$ for $j = 1, \ldots ,
r$.  This automorphism is canonically defined by the congruence

\begin{equation}
\text{Frob}_p \left( q_k \right) \equiv {q_k}^p \mod p \qquad
\text{on the roots $q_k$ of $q(x)$.}
\end{equation}

\noindent Unfortunately, when $p$ is ramified there is not a canonical
choice of
Frobenius element because there are repeated roots.  We will denote $\Sigma$
as a finite set containing the ramified primes.

If $\rho$ is a map $G_{\mathbb Q} \to \mathbb C^{\times}$, the Frobenius
element
induces a map $G_{\mathbb Q} \to \mathbb Z / f_j \, \mathbb Z$, and Dirichlet
characters are maps $\mathbb Z / f_j \, \mathbb Z \to \mathbb C$, it
should follow that Dirichlet characters $\chi$ are closely related to such
maps $\rho$.  This is indeed the case.

\begin{clss-fld}[Artin Reciprocity \cite{MR92e:11001}]
Fix $q(x)$ and $\rho: G_{\mathbb Q} \to \mathbb C^{\times}$ be as
described above.  Define $\chi_{\rho}: \mathbb Z \to \mathbb C$ on primes
by the identification

\begin{equation}
    \chi_{\rho}(p) =
    \begin{cases} \rho(\text{Frob}_p) & \text{when unramified,} \\
    0 & \text{when ramified;} \end{cases}
\end{equation}

\noindent and extend $\chi_{\rho}$ to all of $\mathbb Z$ by multiplication.
Then there exists a positive integer $N = N(\rho)$, called the
conductor of $\rho$, divisible only by the primes which ramify such that
$\chi_{\rho}$ is a Dirichlet character modulo $N$.  \end{clss-fld}

As an example, $x^2 + x - 1$ factors modulo the first few primes as

\begin{equation} \begin{matrix}
    x^2 + x - 1 & \equiv
    & x^2 + x + 1 & \mod 2 \\
    & \equiv & x^2 + x + 2 & \mod 3 \\
    & \equiv & (x+3)^2     & \mod 5 \\
    & \equiv & x^2 + x + 6 & \mod 7 \\
    & \equiv & (x+4)(x+8)  & \mod 11 \\
\end{matrix} \end{equation}

\noindent Then $\text{Frob}_2$, $\text{Frob}_3$, and $\text{Frob}_7$
are each nontrivial automorphisms, while $\text{Frob}_{11}$ is
the identity.  The only ramified prime is 5 because of the repeated roots.
We map

\begin{equation}
    \rho(\text{Frob}_2) = \rho(\text{Frob}_3) = \rho(\text{Frob}_7) =
    -1; \qquad \rho(\text{Frob}_{11}) = +1.
\end{equation}

\noindent The asociated Dirichlet character $\chi_{\rho} = \left( \frac 5*
\right)$
is just the character modulo $N=5$ we considered above.

\subsection{Artin L-Series: 1-Dimensional Case}

We are now in a position to define and study the L-series associated to
Galois representations.

\begin{dirichlet-cor}[Artin, Dirichet] \nocite{MR95f:11085}
Given a map $\rho: G_{\mathbb Q} \to \mathbb C^{\times}$ as defined above with
$\Sigma$ a finite set containing the ramified primes, define the Artin
L-series as

\begin{equation}
    L_{\Sigma}(\rho, \, s) = \sum_{n=1}^{\infty} \frac {\chi_{\rho}(n)}{n^s}
    = \prod_{p \notin \Sigma} \left( 1 - \frac {\rho(\text{Frob}_p)}{p^s}
\right)^{-1}
\end{equation}

Then $L(\rho, \, s)$ converges if $\text{Re}(s) > 1$.  If $\rho$ is
not the trivial map $\rho_0 = 1$ then $L_{\Sigma}(\rho, \, s)$ has
analytic continuation everywhere.  On the other hand,

\begin{equation}
    L_{\Sigma}(\rho_0, \, s) = \zeta(s) \cdot \prod_{p \in \Sigma} \left(1
- p^s \right)
\end{equation}

\noindent has a pole at $s=1$.  \end{dirichlet-cor}

\begin{proof} We sketch the proof.  First, we express the L-series in terms
of the integral of
a function which dies exponentially fast.  This will guarantee that the
L-series
converges for some right-half plane.  For $\tau = x+i \, y$ in the
upper-half plane (i.e. $y > 0$) define

\begin{equation}
    \theta_{\rho}(\tau) = \sum_{n=1}^{\infty} \chi_{\rho}(n) \,
    n^{\epsilon} \, e^{\pi i \, n^2 \, \tau} \qquad \text{where} \qquad
    \epsilon = \begin{cases} 0 & \text{if $\chi_{\rho}(-1) = +1$,} \\
    1 & \text{if $\chi_{\rho}(-1) = -1$;} \\
\end{cases} \end{equation}

\noindent so that the L-series may be expressed as the integral

\begin{equation}
    L(\rho, \, s) = \frac {\pi^{\frac {s+\epsilon}2}}{\left( \frac
    {s+\epsilon-2}2 \right)!}
    \, \int_0^{\infty} \theta_{\rho}(iy) \, y^{\frac {s+\epsilon-2}2} \, dy
\end{equation}

The integrand may have a pole at $y=0$, so we perform our second
trick.  Break the integral up into two regions $0 < y < 1$ and $1 <
y$, and then use the functional equation

\begin{equation}
    \theta_{\overline \rho} \left( - \frac 1{N^2 \, \tau} \right) =
    w(\rho) \, N^{\epsilon + \frac 12} \, \tau^{\epsilon + \frac 12} \
\theta_{\rho}(\tau)
\end{equation}

\noindent --- where $w(\rho)$ is a complex number of absolute value 1, $N =
N(\rho)$ is the conductor, and $\overline \rho$ is the complex
conjugate --- to express the integral solely in terms of values $1/N < y$:

\begin{equation}
    \int_0^{\infty} \theta_{\rho}(iy) \, y^{\frac {s+\epsilon-2}2}
    \, dy = \int_{1/N}^{\infty} \left[ \theta_{\rho}(iy) \, y^{\frac
    {s+\epsilon-2}2} + w(\overline \rho) \, N^{-s+ \frac 12} \,
    \theta_{\overline \rho}(iy) \, y^{\frac {-s+\epsilon-1}2} \right] dy
\end{equation}

\noindent Hence the integral converges for all complex $s$.  \end{proof}


\section{Artin's Conjecture}

\subsection{Artin L-Series: General Case}

Fix $q(x)$ as an irreducible polynomial of degree $d$ with leading
coefficient 1 and integer coefficients, let $\Sigma$ be a finite set
containing the primes which ramify, and set $G_{\mathbb Q}$ as the
absolute Galois group as before.  We found that there is a canonical
permutation $\varrho: G_{\mathbb Q} \to GL_d(\mathbb C)$ induced by
the action on the roots.  Consider the ring

\begin{equation}
    V_{\varrho} = \left \{ \left. \sum_{\sigma \in G_{\mathbb Q}}
\lambda_{\sigma}
    \, \varrho(\sigma) \in GL_d(\mathbb C) \right| \lambda_{\sigma}
    \in \mathbb C \right \}
\end{equation}

\noindent generated by the linear combinations of matrices in the
image of $\varrho$.  We view this as a complex vector space, which is acted
upon
by the linear transformations $\varrho(\sigma)$ quite naturally by matrix
multiplication.  As with any complex vector space, we may decompose it
into invariant subspaces.  Before, we considered only one-dimensional spaces,
but now we generalize to an arbitrary invariant irreducible subspace $V \subseteq
V_{\varrho}$.
We restrict $\varrho$ such that the action is faithful on this
subspace.  That is,

\begin{equation}
    \rho = \varrho |_V: \quad G_{\mathbb Q} \to GL(V) \qquad \text{is
    irreducible.}
\end{equation}

Define the L-series associated to $\rho$ as the product

\begin{equation}
    L_{\Sigma}(\rho, \, s) = \prod_{p \notin \Sigma}
    \det \left( 1 - \frac {\rho(\text{Frob}_p)}{p^s} \right)^{-1}
    = \sum_{n=1}^{\infty} \frac {a_{\rho}(n)}{n^s}
\end{equation}

\noindent As before, there is a corresponding $N = N(\rho)$, called the {\em
conductor}, which is divisible by the primes $p \in \Sigma$.  By
considering the determinant, we find a Dirichlet character
$\epsilon_{\rho}$, called the {\em nebentype}, which is
associated to the one-dimensional Galois representation $\det \circ
\rho$.  The coefficients $a_{\rho}(n)$ are closely related to the
Frobenius element:

\begin{equation}
    a_{\rho}(p) =
    \begin{cases} \text{tr} \, \rho(\text{Frob}_p) & \text{when unramified,} \\
    0 & \text{when ramified.} \end{cases}
\end{equation}

\noindent Quite naturally, two questions arise once again:

{\em \begin{center}
    For which complex numbers $s$ is this series a well-defined function?

    Can that function be continued analytically to the entire complex plane?
\end{center}}

\noindent The first question has an answer which is consistent with
the theme so far.

\begin{art_prop}[Artin \cite{MR31:1159}]
$L_{\Sigma}(\rho, \, s)$ converges if $\text{Re}(s) > 1$.  The L-series
associated to the trivial map $L_{\Sigma}(\rho_0, \, s)$ has a pole at
$s=1$.  \end{art_prop}

\noindent However, the second question remains an open problem.

\begin{art_conj}[Artin] If $\rho$ is irreducible, not trivial, and is
unramified outside a finite set of primes $\Sigma$, then
$L_{\Sigma}(\rho, \, s)$ has analytic continuation everywhere.
\end{art_conj}

The case of one-dimensional Galois representations (i.e. $V \simeq
\mathbb C$) was proved in full
generality with the advent of Class Field Theory.  Indeed, any
one-dimensional Galois representation must necessarily be abelian, so that
by Artin Reciprocity the representation can be associated with a character
defined on the idele group.   When working over $\mathbb Q$, this
amounts to saying that every one-dimensional representation may be associated
with a Dirichlet character.

Many mathematicians, inspired by this result, began work on the
irreducible two-dimensional representations (i.e. $V \simeq \mathbb
C^2$).  Felix Klein \cite{MR96g:01046}
had showed that the only finite projective images in the complex
general linear group correspond to
the Platonic Solids; that is, they are the rotations of the regular polygons
and regular polyhedra.  This is because we have the injective map

\begin{equation}
    SO_3(\mathbb R) = \left \{ \left. \gamma \in \text{Mat}_3(\mathbb R)
\right| \det
    \gamma = 1, \ {\gamma}^t = {\gamma}^{-1} \right \} \to PGL_2(\mathbb C)
\end{equation}

\noindent which relates rotations of three-dimensional symmetric objects
with $2 \times 2$ matrices modulo scalars.  Hence, it suffices to consider
irreducible
two-dimensional projective representations with these images
in order to prove Artin's Conjecture in this case.

Most of these cases of the conjecture have been answered in the affirmative.
Irreducible cyclic and dihedral representations (that is,
representations whose image in $PGL_2(\mathbb C)$ is isomorphic to
$Z_n$ or $D_n$, respectively) may be interpreted as representations
induced from abelian ones, so that the proof of analytic continuation may be
reduced to one using Dirichlet characters.  Irreducible
tetrahedral and some octahedral representations (i.e. projective image
isomorphic to
$A_4$ or $S_4$, respectively) were proved to give entire
L-series due to work by Robert Langlands \cite{MR82a:10032} in the
1970's on base change for $GL(2)$.  The remaining cases for irreducible
octahedral representations were proved shortly thereafter by Jerrold Tunnell
\cite{MR82j:12015}.  Such methods worked because they exploited the existence
proper nontrivial normal subgroups.  Unfortunately, the
simple group of order 60 has none, so it is still not known whether
the irreducible icosahedral representations (i.e. projective image
isomorphic to $A_5$) have analytic continuation.  The first known
example to verify Artin's conjecture in this case did not surface
until Joe Buhler's work \cite{MR58:22019} in 1977.  It is this
example we wish to consider in detail.

For general finite dimensional representations $\rho: G_{\mathbb Q}
\to GL(V)$, not much is known.
It is easy to show that the L-series is analytic in a right-half of the
complex plane.
In 1947, Richard Brauer \cite{MR8:503g} proved that the characters associated
to representations of finite groups are a
finite linear combination of one-dimensional characters, and so the
corresponding L-series have {\em meromorphic} continuation; that is,
the functions have at worst poles at a finite number of places.
Brauer's proof does not guarantee that the integral
coefficients of such a linear combination are positive; it
can be shown that in many cases the coefficients are negative so that
the proof of continuation to the entire complex plane may be reduced
to showing that the poles of the L-series are cancelled by the zeroes.

\subsection{Maass Forms and the Langlands Program}

Robert Langlands completed a circle of ideas which related L-series,
complex representations, and automorphic representations.  While the
deep significance of these ideas is far beyond the scope of this
paper, we will content ourselves with a simplified consequence.

\begin{langlands}[Langlands \cite{MR82a:10032}] \label{langlands}
In order to prove Artin's Conjecture for two-dimensional
representations $\rho$ unramified outside a finite set of primes, it suffices
to prove that

\begin{equation} \label{maass}
    f_{\rho}(\tau) = \sum_{n=0}^{\infty} a_{\rho}(n) \, y^{\frac k2} \,
    e^{2 \pi i \, n \, \tau} \qquad (\tau = x + iy)
\end{equation}

\noindent is a Maass cusp form of weight $k=1$, level $N = N(\rho)$,
and nebentype $\epsilon_{\rho} = \det \circ \rho$.
\end{langlands}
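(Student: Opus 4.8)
We sketch the reduction; the plan is to run an argument parallel to the proof of the Corollary above, with the theta series replaced by a genuine automorphic form. The assertion is that \emph{modularity implies analyticity}: granting that $f_\rho$ is a Maass cusp form of weight $k=1$, level $N=N(\rho)$, and nebentype $\epsilon_\rho=\det\circ\rho$, we must produce the analytic continuation of $L_\Sigma(\rho,\,s)$. The point of departure is that, by the very definition of $f_\rho$ in \eqref{maass}, the Dirichlet series attached to $f_\rho$ is nothing but
\begin{equation}
  \sum_{n=1}^{\infty}\frac{a_\rho(n)}{n^s}=L_\Sigma(\rho,\,s),
\end{equation}
so it suffices to show that the L-series of a weight-$1$ Maass cusp form extends to an entire function.

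This is the classical theory of Hecke and Maass. Because $f_\rho$ is a \emph{cusp} form, $f_\rho(iy)$ decays exponentially as $y\to\infty$; the Fricke involution $\tau\mapsto-1/(N\tau)$ carries $f_\rho$ to another weight-$1$ cusp form $g_\rho$, with nebentype $\overline{\epsilon_\rho}$ matching the complex conjugate $\overline\rho$, and $g_\rho(iy)$ has the same exponential decay. First I would form the completed L-series
\begin{equation}
  \Lambda(f_\rho,\,s)=\gamma(s)\int_0^{\infty}f_\rho(iy)\,y^{s}\,\frac{dy}{y},
\end{equation}
where $\gamma(s)$ is the product of Gamma functions and powers of $\pi$ attached to the weight and Laplace eigenvalue of $f_\rho$; then break the integral at $y=1/\sqrt N$ and use the transformation law under the Fricke involution to fold the piece over $\left(0,1/\sqrt N\right)$ onto $\left(1/\sqrt N,\infty\right)$. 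What remains is an integral over $\left(1/\sqrt N,\infty\right)$ of a combination of $f_\rho(iy)$ and $g_\rho(iy)$ against elementary functions of $y$, and its integrand dies exponentially at both ends; hence $\Lambda(f_\rho,\,s)$ is entire in $s$ and satisfies a functional equation relating it to $\Lambda(g_\rho,\,k-s)$. The one delicate piece of bookkeeping is the archimedean factor $\gamma(s)$: for an even $\rho$ the form $f_\rho$ is a true Maass form with Laplace eigenvalue $\tfrac14$, so $\gamma(s)$ is a product of two Gamma functions, whereas for an odd $\rho$ it degenerates to the single Gamma function of a holomorphic weight-$1$ form; the folding argument is unaffected either way. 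Finally, since $\rho$ is irreducible and nontrivial, $f_\rho$ is genuinely cuspidal rather than an Eisenstein series, so no pole is introduced at $s=1$, and $L_\Sigma(\rho,\,s)$ is entire --- which is Artin's Conjecture for $\rho$.

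The substance, of course, is buried in the hypothesis: constructing the Maass form $f_\rho$ out of $\rho$ is the modularity statement that this theorem does \emph{not} prove. And even granting a weight-$1$ Maass cusp form with the prescribed Hecke eigenvalues, the hard part will be the local harmonic analysis at the bad primes --- checking that the level of $f_\rho$ is precisely the Artin conductor $N(\rho)$, that its nebentype is $\det\circ\rho$, and that its eigenvalues at the primes $p\in\Sigma$ reproduce the ramified Euler factors of $L_\Sigma(\rho,\,s)$. This local compatibility is the classical shadow of the local Langlands correspondence for $GL(2)$; once it is in hand, the global contour shifting above is routine. I would also note in passing that the converse implication, recovering an automorphic form from the analytic continuation and functional equations of $L_\Sigma(\rho,\,s)$ together with all of its twists $L_\Sigma(\rho\otimes\chi,\,s)$ by Dirichlet characters, is Weil's converse theorem, so that for two-dimensional $\rho$ the statements that $f_\rho$ is modular and that $L_\Sigma(\rho,\,s)$ is entire are essentially equivalent.
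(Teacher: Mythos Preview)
Your proposal is correct and follows the same approach as the paper: express $L_\Sigma(\rho,\,s)$ as the Mellin transform of $f_\rho$ along the imaginary axis and use the exponential decay of a cusp form, together with the transformation under $\tau\mapsto -1/(N\tau)$, to obtain the analytic continuation. The paper's sketch is terser---it records only the Mellin integral and the decay at infinity, leaving the folding via the functional equation implicit by analogy with the one-dimensional argument given earlier---whereas you spell out the Fricke involution, the splitting of the integral at $1/\sqrt{N}$, the even/odd archimedean dichotomy, and the connection to Weil's converse theorem.
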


The idea is to express the L-series as the integral of a function
which dies exponentially fast.  Indeed, we have the relation

\begin{equation}
    L_{\Sigma}(\rho, \, s) = \frac {2^s \, \pi^s}{(s-1)!} \,
    \int_0^{\infty} f_{\rho}(iy) \, y^{s- \frac k2 - 1} \, dy
\end{equation}

\noindent and, by definition, the function dies off exponentially fast
as $y \to \infty$.

In general, a smooth function $f: \{ x+iy \, | \, y>0 \} \to \mathbb C$ is
called a {\em Maass form} of weight $k \in \mathbb Z$, level $N \in \mathbb
Z$, and nebentype $\varepsilon : \mathbb Z \to \mathbb C$
if it satisfies the following properties:

\begin{enumerate}
    \item {\em Eigenfunction of the non-Euclidean Laplacian.}
    $f(x+iy)$ satisfies the differential equation
    \begin{equation}
	\left \{ -y^2 \left( \frac {\partial^2}{\partial x^2} +
	\frac {\partial^2}{\partial y^2} \right)
	+ i \, k \, y \, \frac {\partial}{\partial x} \right \} \, f(\tau)
	= \frac k2 \left( 1-\frac k2 \right) \, f(\tau)
    \end{equation}
    \item {\em Exponential Decay.}  For any complex $s$ with
    $\text{Re}(s) > 1$,
    \begin{equation}
	\lim_{y \to \infty} f(x + i y)  \, y^{s-1}= 0
    \end{equation}
    \item {\em Transformation Property.} For all matrices in the group

    \begin{equation}
	\Gamma_0(N) = \left \{ \left. \begin{pmatrix} a & b \\ c & d
\end{pmatrix}
	\in \text{Mat}_2(\mathbb Z) \right|
	\text{$a d - b c = 1$ and $c$ is divisible by $N$} \right \}
    \end{equation}
    \noindent we have the identity
    \begin{equation}
	f \left( \frac {a \, \tau + b}{c \, \tau + d} \right) =
	\varepsilon(d) \, \left( \frac {c \, \overline \tau + d}{c \, \tau + d}
	\right)^{k/2} \, f(\tau)
    \end{equation}
    \item If in addition we have
    \begin{equation} \label{cusp}
	\int_0^1 f(\tau + x) \, dx = 0 \qquad \text{for all $\tau \in \{ x + i
	y \, | \, y>0 \}$;}
    \end{equation}
    \noindent we say that $f$ is a {\em cusp} form.  Otherwise, we
    call $f$ an {\em Eisenstein series}.
\end{enumerate}

For a given irreducible complex representation $\rho$ which is ramified
outside
of a finite number of primes $\Sigma$, the series in \eqref{maass}
always satisfies the first two conditions.  The condition in
\eqref{cusp} is equivalent to $a_{\rho}(0) = 0$, which happens if and
only if $\rho$ is not the trivial representation.  Hence, in order to
invoke Theorem \ref{langlands} it suffices to prove the transformation
property.  Langlands succeeded in proving this in many cases by
proving a generalization of the Selberg Trace Formula.  Unfortunately,
there does not appear to be a way to use these ideas in the
icosahedral case.


\section{Constructing Examples of Icosahedral Representations}

Not many examples satisfying Artin's Conjecture in the icosahedral
case are known.  Those that are may be generated by the following
program, initiated by Joe Buhler \cite{MR58:22019} and furthered by Ian Kiming
\cite{MR95i:11001}.

\begin{enumerate}
    \item Construct an $A_5$-extension $K/\mathbb Q$ by considering quintics.
    \item Consider the discriminant of $K$ in order find possible
    conductors.  This will uniquely specify the representation.
    \item Construct weight 2 cusp forms by considering dihedral
representations.
    \item Divide by an Eisenstein series to find a weight 1 form.
\end{enumerate}

While this program is straightforward, there are two computational
barriers.  First, finding the discriminant of an extension can be a tedious
procedure.  Unfortunately, the method above relies on this step in order to
pinpoint the representation and to generate the weight 2 cusp forms.
Second, division by Eisenstein series is much more difficult that it
sounds.  One must find the zeroes of the weight 2 cusp forms, the
zeroes of the Eisenstein series, and then show that they occur at the
same places.

Motivated by these difficulties, we ask

{\em \begin{center}
    Can we generate Buhler's/Kiming's examples by using elliptic curves?
\end{center}}

\noindent We modify the program above with this question in mind.

\begin{enumerate}
    \item Construct an $A_5$-extension $K/\mathbb Q$ by considering
    quintics.  Use classical results due to Klein to associate elliptic curves.
    \item Construct the $A_5$-representations by considering the 5-torsion.
    \item Construct weight 2 cusp forms associated to the elliptic curve.
    \item Multiply by an Eisenstein series to find a weight 5 form.
\end{enumerate}

\subsection{Step \#1: Relating $A_5$-Extensions to Elliptic Curves}

Felix Klein showed how to associate an elliptic curve to a certain
class of polynomials.

\begin{klein}[Klein \cite{MR96g:01046}]
Fix $q(x) = x^5 + A \, x^2 + B \, x + C$ as a polynomial with rational
coefficients, and assume that $q(x)$ has Galois group $A_5$.  Once
one solves for $j$ in the system

\begin{equation} \begin{aligned}
    A & = - \frac {20}{j} \, \left[ 2 \, m^3 + 3 \, m^2 \, n
    + 432 \, \frac {6 \, m \, n^2 + n^3}{1728 - j} \right] \\
    B & = - \frac 5j \, \left[ m^4 -  864 \, \frac {3 \, m^2 \,
    n^2 + 2 \, m \, n^3}{1728 - j} +  559872 \, \frac {n^4}{(1728 - j)^2}
\right] \\
    C & = - \frac 1j \, \left[ m^5 - 1440 \, \frac {m^3 \, n^2}{1728 -
    j} + 62208 \, \frac {15 \, m \, n^4 + 4 \, n^5}{(1728 - j)^2} \right] \\
\end{aligned} \end{equation}

\noindent then every root of $q(x)$ can be expressed in terms of the
5-torsion on any elliptic curve $E$ with $j = j(E) \in \mathbb
Q(\sqrt{5})$. \end{klein}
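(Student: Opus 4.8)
The plan is to follow Klein's classical theory of the icosahedron and the quintic, rephrased in the modern language of elliptic curves. The starting point is the identification of $A_5$ with $\mathrm{PSL}_2(\mathbb{F}_5)$, acting on the $\mathbb{P}^1$ whose homogeneous coordinate ring is generated by the five $2$-Sylow subgroups of $A_5$; equivalently, $A_5$ acts on the icosahedron and hence on the Riemann sphere via rotations. Klein showed that the field of $A_5$-invariant rational functions on this $\mathbb{P}^1$ is generated by a single rational function $\mathcal{J}$ of degree $60$, and that a general quintic with Galois group $A_5$ can be put (after a Tschirnhaus transformation removing the $x^4$- and $x^3$-terms, which is exactly the shape $x^5 + A\,x^2 + B\,x + C$) into a one-parameter normal form whose unique invariant is the value $\mathcal{J} = j$. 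First I would recall this reduction and explain why the ``principal quintic'' normal form is determined up to the choice of two auxiliary parameters $m, n$ — these parametrize the ambiguity of a binary form of appropriate degree with the prescribed invariant — which is precisely the role played by $m$ and $n$ in the displayed formulas for $A$, $B$, $C$.

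The second, and decisive, step is to realize this invariant $j$ as the classical $j$-invariant of an elliptic curve. Here I would invoke the fact that the modular curve $X(5) = \Gamma(5)\backslash\mathfrak{H}^*$ has genus zero and that the covering $X(5) \to X(1) = \mathbb{P}^1_j$ is Galois with group $\mathrm{PSL}_2(\mathbb{F}_5) \cong A_5$, realized geometrically as the icosahedral covering of the sphere by itself. Thus Klein's degree-$60$ invariant function $\mathcal{J}$ on the icosahedral $\mathbb{P}^1$ \emph{is} the map $X(5) \to X(1)$, and solving the system above for $j$ amounts to locating, on $X(1)$, the image of the point on $X(5)$ corresponding to the given quintic. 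Concretely: the five roots $q_1,\dots,q_5$ of $q(x)$ index the five points of an $E[5]$-structure up to sign, the Galois action on them is the $A_5$-action on the fibre of $X(5)\to X(1)$, and reversing the covering exhibits each $q_k$ as a rational function (built from Klein's icosahedral forms — the $12$-ic, $20$-ic, and $30$-ic) evaluated at the Weierstrass coordinates of the corresponding $5$-torsion point. The condition $j \in \mathbb{Q}(\sqrt5)$ appears because the fibre of $X(5) \to X(1)$ over a rational quintic is defined over $\mathbb{Q}$ but its points are permuted by $A_5$; the ``half'' of that group fixing $\sqrt5$ (the unique quadratic subfield of the $A_5$-extension, coming from $\mathbb{Q}(\zeta_5)^+$) descends the individual roots to $\mathbb{Q}(\sqrt5)$, while any $E$ with that $j$-value carries the requisite $5$-torsion structure.

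Assembling these, I would argue as follows. Given $q(x) = x^5 + A\,x^2 + B\,x + C$ with Galois group $A_5$, one reads off from the three equations a value of $j$ (this is a finite computation: eliminating $m$ and $n$ yields a single equation in $j$ whose relevant root is forced by the requirement that the resulting quintic actually have the prescribed Galois group). Choose any elliptic curve $E/\overline{\mathbb{Q}}$ with $j(E) = j$; by the theory of $X(5)$ one may take $j(E) \in \mathbb{Q}(\sqrt5)$. Then Klein's explicit formulas express each $q_k$ as a universal rational expression in the coordinates of a point of $E[5]$ — the key identity being that Klein's invariant combination of the icosahedral forms, when pulled back along $X(5) \to X(1)$, reproduces the coefficients $A$, $B$, $C$ as displayed. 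The main obstacle, and the step I expect to demand real work, is verifying that the displayed system is exactly the right one: that is, matching Klein's nineteenth-century normalization of the icosahedral invariants and the resolvent of the principal quintic against the modern normalization of the modular curve $X(5)$ and the Tate/Weierstrass $5$-division polynomial, including getting the numerical constants ($20$, $432$, $864$, $1728$, $62208$, etc.) and the precise role of $m, n$ correct. Rather than re-derive these, I would cite Klein's \emph{Lectures on the Icosahedron} together with the modern reworking, and restrict my exposition to checking consistency on a sample case.
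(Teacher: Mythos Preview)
The paper does not give its own proof of this theorem: it is stated with attribution to Klein and a citation to his \emph{Vorlesungen \"uber das Ikosaeder}, and the text passes immediately to a worked example. So there is nothing in the paper to compare your argument against beyond the bare citation; your outline is in fact a reasonable reconstruction of what that citation contains (icosahedral invariants on $\mathbb{P}^1$, the identification of the degree-$60$ cover with $X(5)\to X(1)$, and the resolvent theory for the principal quintic), and in that sense it is aligned with the paper's intent.

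One genuine error in your sketch, however, concerns the origin of $\sqrt{5}$. You write of ``the unique quadratic subfield of the $A_5$-extension,'' but $A_5$ is simple, so an $A_5$-extension of $\mathbb{Q}$ has \emph{no} intermediate fields at all, quadratic or otherwise; there is no ``half'' of $A_5$ fixing anything. The $\sqrt{5}$ does not come from the splitting field of $q(x)$. It arises instead from the representation theory: the two faithful projective embeddings $A_5\hookrightarrow\mathrm{PGL}_2(\mathbb{C})$ (equivalently, the two $2$-dimensional representations of the binary icosahedral group) are Galois conjugate over $\mathbb{Q}(\sqrt{5})$, so Klein's explicit icosahedral forms and the resulting resolvent live over that field. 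In the paper's own example you can see this concretely: the Tschirnhaus substitution bringing the original rational quintic into principal form already requires $\sqrt{5}$, and the resulting $A$, $B$, $C$, and $j$ all land in $\mathbb{Q}(\sqrt{5})$. Your appeal to $\mathbb{Q}(\zeta_5)^+$ is the right field, but the mechanism you describe for it is not.
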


As an example, consider the quintic $x^5 + 10 \, x^3 - 10 \,
x^2 + 35 \, x - 18$.  This is not in
the form of the principal quintic above, but after making the substitution

\begin{equation}
    x \mapsto \frac {(1+\sqrt{5}) \, x + (10-30 \sqrt{5})}{2 \, x + (35 +
5\sqrt{5})}
\end{equation}

\noindent the polynomial of interest is

\begin{equation}
    x^5 - 125 \left( 185 + 39 \sqrt{5} \right) \, x^2 - 6875 \left( 56 +
    19 \sqrt{5} \right) \, x - 625 \left( 10691 + 2225 \sqrt{5} \right)
\end{equation}

\noindent with ramified primes $\Sigma = \{ 2, \, \sqrt{5} \}$.  One
solves the equations above to find the elliptic curve

\begin{equation}
    E_0 : \quad y^2 = x^3 + (5 - \sqrt{5}) \, x^2 + \sqrt{5} \, x;
    \qquad j(E_0) = 86048 - 38496 \, \sqrt{5}.
\end{equation}

\subsection{Step \#2: Constructing Icosahedral Representations}

Once the ellipticcurve is found, one constructs the icosahedral representation by following a
rather simple algorithm.

\begin{l-series}[Goins \cite{G1}, Klute \cite{MR98e:11067}]
Let $E$ be an elliptic curve as constructed above, and $\Sigma$ a set
containing ramified primes.  Then there exists an icosahedral representation
$\rho_E$ with L-series

\begin{equation}
    L_{\Sigma} \left( \rho_E, \, s \right) = \prod_{\mathfrak p \notin
    \Sigma} \left( 1- \frac {a_E(\mathfrak p)}{\mathbb N \, \mathfrak
    p^s} + \frac {\omega_5(\mathbb N \, \mathfrak p)}{\mathbb N \, \mathfrak
    p^{2s}} \right)^{-1}
\end{equation}

\noindent where $\omega_5$, a Dirichlet character modulo 5, is the
nebentype; and $a_E(\mathfrak p) \in \mathbb Q(i, \sqrt{5})$ is the
trace of Frobenius.  \end{l-series}

We explain how this works in the simplest case, when the elliptic
curve is defined over $\mathbb Q(\sqrt{5})$.  Given a prime number
$p$, denote the prime ideal lying above $p$ as

\begin{equation}
    \mathfrak p = \left \{ \left. a + \frac {-1+\sqrt{5}}2 \, b \in \mathbb
    Q(\sqrt{5}) \right| a, \, b \in \mathbb Z; \text{ $p$ divides $a^2 - a
\, b -
    b^2$} \right \}
\end{equation}

\noindent The nebentype may be expressed as

\begin{equation}
    \omega_5 ( \mathbb N \, \mathfrak p ) = \left( \frac 5p
    \right) \qquad \text{where} \qquad
    \mathbb N \, \mathfrak p = \begin{cases} p & \text{if $p \equiv
    1, 4 \mod 5$;} \\ p^2 & \text{if $p \equiv 2, 3 \mod 5$.} \end{cases}
\end{equation}

\noindent To calculate the trace of Frobenius, one would factor the polynomial

\begin{equation}
    (x+3)^3 \, (x^2 + 11 \, x + 64) - j(E) \mod \mathfrak p
\end{equation}

\noindent consult the table

\begin{center} \begin{tabular}{c|cccc}
    Irred. Factors & Linear & Quadratics & Cubic & Quintic \\ \hline
    $\dfrac {a_E(\mathfrak p)^2}{\omega_5(\mathbb N \, \mathfrak p)}$
    & 4 & 0 & 1 & $ \left( \dfrac {-1 \pm \sqrt{5}}2 \right)^2$ \\
\end{tabular} \end{center}

\noindent and finally decide upon which square root by the congruence

\begin{equation}
    a_E(\mathfrak p) \equiv  \mathbb N \, \mathfrak p + 1 - \mathbb
    \vert \widetilde E (\mathbb F_{\mathfrak p}) \vert \mod{\left(2-i,
    \, \sqrt{5} \right)}
\end{equation}

\noindent where $\vert \widetilde E (\mathbb F_{\mathfrak p}) \vert$
is the number of points on the elliptic curve mod $\mathfrak p$.

As an application, we take a closer look at the elliptic curve associated
to the
polynomial $x^5 + 10 \, x^3 - 10 \, x^2 + 35 \, x - 18$.

\begin{q-curve} \label{q-curve}
Let $E_0$ be the elliptic curve $y^2 = x^3 + (5 - \sqrt{5}) \, x^2 +
\sqrt{5} \, x$.

\begin{enumerate}
    \item $E_0$ is isogeneous over $\mathbb Q(\sqrt{5}, \, \sqrt{-2})$
    to each of its Galois conjugates.  That is, $E_0$ is a $\mathbb
    Q$-curve.
    \item There is a character $\chi_0$ such that $\chi \otimes \rho_{E_0}$ is
    the base change of an icosahedral representation $\rho$ with conductor
    $N(\rho) = 800$ and nebentype $\epsilon_{\rho} = \left( \frac{-1}{\cdot}
    \right)$.  Specifically, $\rho$ is the icosahedral Galois representation
    studied in \cite{MR58:22019}.
\end{enumerate}
\end{q-curve}

\begin{proof}
We sketch the ideas.  The L-series of the twisted representation
$\chi \otimes \rho_{E_0}$ is defined as

\begin{equation}
    L_{\Sigma} \left( \rho_{E_0}, \, \chi, \, s \right) = \prod_{\mathfrak
p \notin
    \Sigma} \left( 1- \frac {\chi(\mathfrak p) \, a_{E_0}(\mathfrak
    p)}{\mathbb N \, \mathfrak p^s} + \frac {\chi(\mathfrak p)^2 \,
    \omega_5(\mathbb N \, \mathfrak p)}{\mathbb N \, \mathfrak
    p^{2s}} \right)^{-1}
\end{equation}

\noindent If we were to find a character $\chi$ such that 1) it is unramified
outside of $\Sigma$; 2) $\chi(\sigma \, \mathfrak p) \, a_{E_0}(\sigma
\, \mathfrak p) = \chi(\mathfrak p) \, a_{E_0}(\mathfrak p)$ for all $\sigma
\in G_{\mathbb Q}$; and 3) $\chi(\mathfrak p)^2 \, \omega_5(\mathbb N \,
\mathfrak p) = \epsilon_{\rho}(\mathbb N \, \mathfrak p)$; then the
L-series would be in the form

\begin{equation}
    L_{\Sigma} \left( \rho_{E_0}, \, \chi, \, s \right) = \prod_{\mathfrak
p \notin
    \Sigma} \left( 1- \frac {\alpha(\mathbb N \, \mathfrak p)}{\mathbb
    N \, \mathfrak p^s} + \frac {\epsilon_{\rho}(\mathbb N \, \mathfrak
    p)}{\mathbb N \, \mathfrak p^{2s}} \right)^{-1} = L_{\Sigma}
    \left( \rho |_{\mathbb Q(\sqrt(5)}, \, s \right)
\end{equation}

\noindent for some representation $\rho$ defined over $\mathbb Q$.
Cleary $\rho$ has nebentype $\epsilon_{\rho}$ and is
unramified outside of $\Sigma$, so that $\rho$ is the unique
representation studied in \cite{MR58:22019}.  It suffices to construct
the character $\chi$.

The elliptic curve $E_0$ is isogeneous to its congujates, which means

\begin{equation}
    a_{E_0}(\sigma \, \mathfrak p) = \left( \frac {-2}{\mathbb N \, \mathfrak
    p} \right) \, a_{E_0}(\mathfrak p) \implies \begin{aligned}
    \chi(\sigma \, \mathfrak p) & = \left( \frac {-2}{\mathbb N \,
    \mathfrak p} \right) \, \chi(\mathfrak p) \\ \chi(\mathfrak
    p)^2  & = \omega_5(\mathbb N \, \mathfrak p)^{-1} \, \left( \frac
    {-1}{\mathbb N \, \mathfrak p} \right) \end{aligned}
\end{equation}

\noindent One constructs the character explicitly by considering the
ideal of $\mathbb Q(\sqrt{5})$ lying above 40.  \end{proof}

We have shown the existence of the icosahedral representation in
\cite{MR58:22019} without the worry of computing the discriminant of
the splitting field.  Moreover, using the algorithm above the
coefficients can be calculated explicitly.

\subsection{Step \#3: Constructing Weight 2 Cusp Forms}

We will exploit the fact that $E_0$ is isogenous to its Galois
conjugates.  While it is not necessary
in general that $E$ be a $\mathbb Q$-curve in
order to find an icosahedral representation using the steps outlined
in the previous subsection, we do need this fact in this specific
case to work with cusp forms.  Indeed, using the formulas in the
previous subsection one can show that there is always a character such
that the twisted icosahedral representation comes from $\mathbb Q$,
but a priori there seems to be little evidence that the elliptic curve
will always be isogenous to its conjugates.

\begin{modularity} \label{modularity}
Let $E_0$ be the elliptic curve $y^2 = x^3 + (5 - \sqrt{5}) \, x^2 +
\sqrt{5} \, x$ and $\Sigma = \{ 2, \, \sqrt{5} \}$.  There is a cusp
form $f_0$ of weight 2, level 160 such that

\begin{equation}
    L_{\Sigma}(\rho, \, s) \equiv L_{\Sigma}(f_0, \, s) \mod{\left(2-i,
    \, \sqrt{5} \right)}
\end{equation}
\end{modularity}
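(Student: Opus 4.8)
The plan is to work modulo the prime $\mathfrak{l}=(2-i,\sqrt5)$ above $5$, whose residue field is $\mathbb{F}_5$. The crucial structural fact is that the binary icosahedral group is $\widetilde{A}_5\cong SL_2(\mathbb{F}_5)$, so Buhler's icosahedral representation $\rho$ of Proposition \ref{q-curve} \cite{MR58:22019} has an integral model at $\mathfrak{l}$ whose reduction $\bar\rho\colon G_{\mathbb Q}\to GL_2(\mathbb{F}_5)$ is continuous, has the same projective image $A_5$, has determinant the reduction of the nebentype, and satisfies $\operatorname{tr}\bar\rho(\text{Frob}_p)\equiv a_\rho(p)\pmod{\mathfrak{l}}$ for every $p\notin\Sigma$. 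It therefore suffices to produce a weight-$2$ newform $f_0$ of level $160$ (with the forced nebentype) whose attached mod-$\mathfrak{l}$ representation $\bar\rho_{f_0,\mathfrak{l}}$ is isomorphic to $\bar\rho$: since both representations are irreducible with projective image $A_5$, a Brauer--Nesbitt comparison together with the Chebotarev density theorem upgrades agreement of the traces of Frobenius to an isomorphism, hence to $a_\rho(n)\equiv a_{f_0}(n)\pmod{\mathfrak{l}}$ for all $n$, which is exactly the asserted congruence of $L$-series.

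To describe $\bar\rho$ explicitly I would route through $E_0$. By Klein's theorem the $A_5$-field cut out by $x^5+10x^3-10x^2+35x-18$ sits inside the $5$-division field of $E_0$, and feeding this into Proposition \ref{q-curve} gives $\bar\rho|_{G_{\mathbb Q(\sqrt5)}}\cong\eta\otimes\rho_{E_0,5}$, where $\rho_{E_0,5}$ is the representation of $G_{\mathbb Q(\sqrt5)}$ on $E_0[5]$ and $\eta$ is the mod-$\mathfrak{l}$ reduction of a character assembled from $\chi_0$ and the quartic character $\omega_5$. In particular $\operatorname{tr}\rho_{E_0,5}(\text{Frob}_{\mathfrak p})\equiv \mathbb N\mathfrak{p}+1-\vert\widetilde{E}_0(\mathbb F_{\mathfrak p})\vert\pmod{\mathfrak{l}}$, so $\bar\rho$, after restriction to $\mathbb Q(\sqrt5)$, is visibly a twist of the $5$-torsion of the $\mathbb Q$-curve $E_0$.

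Next I would invoke modularity to manufacture $f_0$. Because $E_0$ is a $\mathbb Q$-curve (Proposition \ref{q-curve}(1)), the abelian surface $\operatorname{Res}_{\mathbb Q(\sqrt5)/\mathbb Q}E_0$ is of $GL_2$-type over $\mathbb Q$, so by the modularity of $GL_2$-type abelian varieties (Shimura; Ribet; and in general Serre's conjecture) it is attached to a weight-$2$ newform $g$ over $\mathbb Q$, whose base change to $\mathbb Q(\sqrt5)$ recovers $L(E_0,s)$ and whose Galois conjugate accounts for the other factor. A suitable quadratic twist $f_0$ of $g$ by the character underlying $\chi_0$ then has $\bar\rho_{f_0,\mathfrak{l}}\cong\bar\rho$; computing the conductor exponent at $2$ (the wild part, contributing $2^5$) and at $5$ (where $\rho$ is Steinberg up to a ramified twist, contributing $5^1$ rather than the $5^2$ seen in the weight-one conductor $800$) shows that $f_0$ has level exactly $160$. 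As a practical check, one matches $a_\rho(p)\bmod\mathfrak{l}$ against the finite list of weight-$2$ newforms of level $160$ for $p$ up to the Sturm bound (here $48$), which both confirms the construction and singles $f_0$ out uniquely.

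The main obstacle is this modularity step and the accompanying local bookkeeping: one needs to know that the $GL_2$-type surface $\operatorname{Res}_{\mathbb Q(\sqrt5)/\mathbb Q}E_0$ is modular --- for this particular curve this can be checked by hand, while in general it is the deep input --- and then to track the conductor exponents at $2$ and $5$ carefully through the quadratic twist by $\chi_0$ so as to land on level $160$ and the correct nebentype, rather than on $800$ or some intermediate divisor. The remaining ingredients are routine: the reduction $\widetilde{A}_5=SL_2(\mathbb{F}_5)$ modulo $\mathfrak{l}$, the Klein identification of $\bar\rho$ with $E_0[5]$, and the Brauer--Nesbitt/Chebotarev (or Sturm-bound) endgame.
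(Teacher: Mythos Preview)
Your proposal is correct in outline and reaches the same destination, but the route differs substantially from the paper's. The paper proceeds computationally: it checks that $E_0$ has good reduction outside $\Sigma$, runs the Modular Symbol Algorithm over $\mathbb{Q}(\sqrt{5})$ to pin down a Hilbert cusp form $f_0'$ whose Euler factors match those of $E_0$, observes via Proposition~\ref{q-curve} that the twist $\chi\otimes f_0'$ is Galois-invariant and hence descends to a classical form $f_0$ over $\mathbb{Q}$, and finally runs modular symbols once more over $\mathbb{Q}$ to read off the level $160$. The congruence with $L_\Sigma(\rho,s)$ is then inherited from the earlier displayed congruence $a_E(\mathfrak p)\equiv \mathbb N\mathfrak p+1-\lvert\widetilde E(\mathbb F_{\mathfrak p})\rvert$ modulo $(2-i,\sqrt5)$. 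In other words, the paper never appeals to a general modularity theorem for $GL_2$-type abelian varieties or to Serre's conjecture; it instead \emph{computes} the matching form directly and lets the congruence fall out of the construction of $\rho_{E_0}$.

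Your approach trades that computation for structural input: restriction of scalars, the Ribet $GL_2$-type formalism, and modularity (which you correctly flag as the deep step, unavailable in full generality when the paper was written). What you gain is a conceptual explanation of why such an $f_0$ must exist and why its mod-$\mathfrak l$ representation coincides with $\bar\rho$; what the paper gains is that no black box is invoked beyond the Modular Symbol Algorithm, and the level $160$ emerges from a table rather than from a local conductor analysis. One small imprecision on your side: the character $\chi_0$ of Proposition~\ref{q-curve} need not be quadratic (its square is $\omega_5^{-1}\cdot(\tfrac{-1}{\cdot})$), so ``a suitable quadratic twist of $g$'' should be relaxed to a twist by the appropriate finite-order character; this does not affect the argument but would need tightening in a full proof.
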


\begin{proof}  We compute the discriminant of the elliptic curve to
see that it has good reduction outside of $\Sigma$.  It is
straightforward to use the Modular Symbol Algorithm \cite{MR99e:11068} to
calculate coefficients and match a cusp form ${f_0}'$ over $\mathbb
Q(\sqrt{5})$ which is also unramified outside $\Sigma$.  By \ref{q-curve},
the twist $\chi \otimes {f_0}'$ is Galois invariant so it must be the
base change of a cusp form $f_0$ over $\mathbb Q$.  We use the
Modular Symbol Algorithm again to find that the level is 160.  \end{proof}

\subsection{Step \#4: Constructing Weight 5 Cusp Forms}

Using an idea of Ken Ribet \cite{MR95d:11056}, we may strip 5 altogether
from the level
as long as we increase the weight.

\begin{deformations}
Let $E_0$ and $\rho$ be as in \ref{modularity}.  There is a cusp form $f_1$ of weight
5, level 32 and nebentype $\epsilon_{\rho} = \left( \frac {-1}* \right)$
such that

\begin{equation}
    L(\rho_0, \, s) \equiv L(f_0, \, s) \equiv L(f_1, \, s)
    \mod{\left(2-i, \, \sqrt{5} \right)}.
\end{equation}
\end{deformations}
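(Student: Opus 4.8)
The plan is to pass from weight $2$ at level $160 = 2^5 \cdot 5$ to weight $5$ at level $32 = 2^5$ by a congruence argument modulo a prime above $5$, exactly in the spirit of Ribet's level-lowering/weight-raising philosophy. The underlying principle is that the mod-$\ell$ Galois representation attached to $f_0$ (with $\ell = 5$) is, by Proposition \ref{modularity}, congruent to the icosahedral representation $\rho$ modulo $(2-i,\sqrt 5)$, and since $\rho$ has projective image $A_5$ its mod-$5$ reduction is unramified at $5$ (the $5$-adic inertia acts through a finite group whose image in $PGL_2(\mathbb F_5)$ lies in $A_5$ and is in fact trivial on the $\ell$-part in this situation). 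The first step is therefore to record carefully that $\bar\rho_{f_0,5}$ is finite (indeed unramified) at $5$ with the appropriate determinant, so that it cannot genuinely require $5$ in its Serre level.

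Next I would invoke the relevant modularity/companion-form results: a mod-$5$ modular form of weight $2$, level $5N'$, whose attached representation is unramified at $5$ is congruent to a form of level $N'$ — but the price of removing $5$ from the level while the nebentype character $\left(\tfrac{-1}{*}\right)$ and the fixed mod-$5$ representation are preserved is that the weight must change. Concretely, multiplication by a suitable weight-$3$ Eisenstein series of level prime to $5$ and nebentype making the product have the correct character (or, equivalently, applying the theta/Hasse-invariant operator $\theta = q\,\frac{d}{dq}$ appropriately, then correcting the weight) sends a weight-$2$ eigenform to a weight-$2 + (5-1)\cdot\frac12\cdot\ldots$ — more precisely to weight $2 + 3 = 5$ when one twists by the Hasse invariant to kill the $5$ in the level. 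So the key manipulation is: the $q$-expansion $f_1 = E \cdot f_0'$ (for an explicit Eisenstein series $E$ of weight $3$, level dividing $32$, nebentype $\left(\tfrac{-1}{*}\right)$, with $E \equiv 1 \pmod 5$) is a mod-$5$ form of weight $5$, level $32$, nebentype $\left(\tfrac{-1}{*}\right)$, and it is congruent to $f_0$ coefficientwise modulo $(2-i,\sqrt 5)$, hence its $L$-series satisfies $L(f_1,s)\equiv L(f_0,s)$ modulo that prime, and by Proposition \ref{modularity} also $\equiv L(\rho_0,s)$.

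The main obstacle is the last sentence of the previous paragraph made rigorous: I must know that the mod-$5$ modular form $f_1$ so produced is actually the reduction of a \emph{characteristic-zero} classical cusp form of weight $5$, level $32$, not merely a mod-$5$ form. For weight $\geq 2$ this is supplied by the fact that weight-$k$ mod-$p$ cusp forms of level prime to $p$ lift to characteristic zero when $k \geq 2$ (the relevant cohomology has no torsion obstruction in this range; this is standard, e.g. via the Deligne–Rapoport/Katz theory of the modular curve $X_0(32)$ and the fact that $H^1$ of the relevant sheaf is free). I would also need to check that the Eisenstein factor $E$ exists with $E \equiv 1 \pmod 5$ and the stated weight, level, and nebentype — this is a finite computation with Eisenstein series on $\Gamma_0(32)$ with character $\left(\tfrac{-1}{*}\right)$, and the normalized Eisenstein series $E_{4}$ or a weight-$3$ Eisenstein series built from the pair of characters $(1,\left(\tfrac{-1}{*}\right))$ does the job after the standard Kummer-congruence normalization. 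Finally I would confirm that the removal of $5$ does not introduce any spurious ramification at $2$: since everything is already unramified outside $\{2,\sqrt5\}$ and the Eisenstein multiplier has level dividing $32$, the level stays a power of $2$, and a direct Modular Symbol Algorithm computation pins it at exactly $32$, completing the chain of congruences $L(\rho_0,s)\equiv L(f_0,s)\equiv L(f_1,s) \pmod{(2-i,\sqrt5)}$.
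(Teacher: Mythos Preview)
Your overall strategy---multiply $f_0$ by a weight-$3$ Eisenstein series congruent to $1$ modulo $5$, pass to a genuine eigenform, then verify the level by a Modular Symbol computation---is exactly what the paper does. The paper's execution, however, is considerably tighter in two places. First, rather than searching for a weight-$3$ Eisenstein series of level dividing $32$ with nebentype $\left(\tfrac{-1}{*}\right)$ and then having to \emph{check} that it reduces to $1$ modulo $5$ (your ``finite computation with Eisenstein series on $\Gamma_0(32)$''), the paper simply takes Serre's $\ell$-adic Eisenstein series $\mathcal E$ of weight $3$ and level $\ell=5$, for which \cite[Lemme~10]{MR53:7949a} already gives $\mathcal E\equiv 1\pmod 5$; the product $f_0\cdot\mathcal E$ then lives at weight $5$, level $160$, with the correct nebentype. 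Second, your paragraph on lifting a mod-$5$ cusp form back to characteristic zero (via freeness of cohomology on $X_0(32)$) is replaced in the paper by a one-line appeal to \cite[Lemme~6.11]{MR52:284} of Deligne--Serre, which directly produces a characteristic-zero eigenform $f_1\equiv f_0\cdot\mathcal E$. The surrounding discussion you give of Ribet's level-lowering, the Hasse invariant, and unramifiedness of $\bar\rho$ at $5$ is good conceptual motivation for \emph{why} the level should drop from $160$ to $32$, but neither you nor the paper actually proves this step structurally: in both arguments the final descent to level $32$ is a Modular Symbol computation. Note in particular that your sentence ``since \ldots the Eisenstein multiplier has level dividing $32$, the level stays a power of $2$'' is not right as stated---the product still has level $160$ regardless of where $E$ lives, and only the eigenform extracted from it can drop to $32$.
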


\begin{proof}
As defined in \cite{MR53:7949a}, consider the $\ell$-adic Eisenstein series

\begin{equation}
    \mathcal E(\tau) = 1 + \sum_{n=1}^{\infty} a_n(\mathcal E) \, q^n;
    \quad a_n (\mathcal E) = 2 \, \frac {\sum_{d \mid n}
    \epsilon_{\ell}(d)^{-k} \, d^{k-1}}{L \left({\epsilon_{\ell}}^{-k},
    \, 1-k \right)} \in \ell \, \mathbb Z_{\ell}.
\end{equation}

\noindent where $\epsilon_{\ell}$ is the cyclotomic character.
By \cite[Lemme 10]{MR53:7949a}, this has weight $k$, level $\ell$,
nebentype $\epsilon_{\ell}^{-w}$, and satisfies $\mathcal E \equiv 1
\pod{\ell}$.  Setting $w=3$ and $\ell=5$, the product $f_0 \cdot \mathcal E$
has weight 5, level 160, and nebentypus $\epsilon_{\rho}$ so by \cite[Lemme
6.11]{MR52:284},
there is a bona fide eigenform $f_1 \equiv f_0 \cdot \mathcal E$.
Using the Modular Symbol Algorithm one more time we see that $f_1$ has
level 32.  \end{proof}

\bibliographystyle{plain}

\end{document}